\newtheorem{theorem}{Theorem}
\newtheorem*{theorem*}{Theorem}
\newtheorem{lemma}{Lemma}
\newtheorem{proposition}{Proposition}
\newtheorem*{conjecture*}{Conjecture}
\newtheorem*{remark*}{Remark}
\newcommand{\bcen}{\begin{center}}      \newcommand{\ecen}{\end{center}}
\newcommand{\bay}{\begin{array}}      \newcommand{\eay}{\end{array}}
\newcommand{\beq}{\begin{eqnarray*}}      \newcommand{\eeq}{\end{eqnarray*}}
\def\ra{\rightarrow}
\def\dim{\operatorname{dim}}
\def\uq2{U_q(\hat{sl}_2)}
\def\nd{{\noindent}}
\def\mk{{\medskip}}
\title[Remark on theorem]{Remark on a theorem in Mumford's Red Book of Varieties and Schemes}
\thanks{}
\author{Guanglian Zhang}
\address{School of Mathematical Sciences\\
Shanghai Jiao Tong University\\
Shanghai 200240, China}
 \email{g.l.zhang@sjtu.edu.cn}
\date{\today}
\begin{document}

\begin{abstract}
In this paper, we firstly point out, by a counter example, that Proposition 6.4 of Section 6 in Bump's book (\cite{B}) is  error, and then give a correct statement with proof. We finally point out a gap in the proof of Theorem 3,  in Chapter I Section 8, of Mumford's red book \cite{M}, and indicate a way to complete it.

\end{abstract}
%\footnotetext{}

\maketitle

\bigskip
\bigskip

\section{Introduction}
%\centerline{\bf 1. Introduction}

%\bigskip
In order to show \cite[Chapter I, \S~8, Theorem3]{M}, Bump divided his proof into four propositions in \cite{B}. But, there is a mistake in one of these propositions (\cite[Proposition 6.4]{B}). The original proof of Mumford is based on similar ideas, so we find a similar gap in the proof of \cite[Chapter I, \S~6, Theorem 3]{M}. In the following, we first give a counter example of the last statement in \cite[Proposition 6.4]{B}, then present a correct statement and prove it. Finally, we complete the proof of Theorem 3 in Mumford's red book \cite{M}.

\mk\nd{\bf Acknowledgments.} I would like to express my sincere gratitude to Jilong Tong for an interesting discussion.

\section{A counter example}

%\centerline{\bf 2. A counter example}
Let $k$ be an algebraically closed field. According to  ~\cite{B}, an algebraic set is a \emph{variety} if it is irreducible. The following proposition can be found in the book of Bump:

\begin{proposition}[\cite{B} Proposition 6.4]\label{p:1}~Let $\phi:X\rightarrow Y$ be a finite dominant morphism of affine varieties. Then $\phi$ is surjective. The fibers of $\phi$ are all finite. If $Z$ is a closed subset of $X,$
then $\phi(Z)$ is closed, and $\dim(Z)=\dim(\phi(Z)).$ If $W$ is closed subvariety of $Y$, and $Z$ is any irreducible component of $\phi^{-1}(W),$ then $\phi(Z)=W,$ and $\dim (Z)=\dim (W).$
\end{proposition}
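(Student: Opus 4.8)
The plan is to translate the statement into commutative algebra and then read off each assertion from the standard theorems on integral ring extensions. Write $A=k[Y]$ and $B=k[X]$ for the coordinate rings. Finiteness of $\phi$ means $B$ is a finitely generated $A$-module, hence integral over $A$; dominance means the comorphism $A\to B$ is injective; and irreducibility of $X$ and $Y$ means $A$ and $B$ are domains. So $A\hookrightarrow B$ is an integral extension of affine domains, and the whole Proposition should be an exercise in Lying Over, Going Up, Incomparability, and Going Down.

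Surjectivity and finiteness of fibers come from Lying Over together with Artinian-ness. For a maximal ideal $\mathfrak m\subset A$, Lying Over gives a prime $\mathfrak n\subset B$ over $\mathfrak m$; since $A/\mathfrak m\hookrightarrow B/\mathfrak n$ is integral and $A/\mathfrak m$ is a field, $B/\mathfrak n$ is a field, so $\mathfrak n$ is maximal --- hence $\phi$ is surjective on closed points. The fiber over $\mathfrak m$ consists of the maximal ideals of $B$ containing $\mathfrak m B$, of which there are finitely many because $B/\mathfrak m B$ is a finite-dimensional $k$-algebra. For a closed set $Z=V(I)\subseteq X$: the map $A\to B/I$ factors as $A\twoheadrightarrow A/\mathfrak a\hookrightarrow B/I$ with $\mathfrak a=\ker(A\to B/I)$ and $A/\mathfrak a\hookrightarrow B/I$ again module-finite, so Lying Over identifies $\phi(Z)$ with the closed set $V(\mathfrak a)$. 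For the dimension equality, treat $Z$ irreducible first: an integral extension preserves Krull dimension, so $\dim Z=\dim\overline{\phi(Z)}=\dim\phi(Z)$; the general case follows by writing $Z=\bigcup_i Z_i$, $\overline{\phi(Z)}=\bigcup_i\overline{\phi(Z_i)}$, and taking maxima.

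The real content --- and the step I expect to be the obstacle --- is the final sentence. Set $\mathfrak p=I(W)\subset A$, a prime. The irreducible components of $\phi^{-1}(W)=V(\mathfrak p B)$ correspond to the primes $\mathfrak q\subset B$ minimal over $\mathfrak p B$; for such a $\mathfrak q$, the closedness argument above gives $\phi(V(\mathfrak q))=V(\mathfrak q\cap A)\subseteq W$, with equality if and only if $\mathfrak q\cap A=\mathfrak p$, and in any case the integral inclusion $A/(\mathfrak q\cap A)\hookrightarrow B/\mathfrak q$ forces $\dim V(\mathfrak q)\le\dim W$. So everything reduces to showing that every prime minimal over $\mathfrak p B$ contracts to $\mathfrak p$. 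The intended tool is Going Down: if $\mathfrak q\cap A\supsetneq\mathfrak p$, then Going Down applied to $\mathfrak p\subset\mathfrak q\cap A$ and $\mathfrak q$ would produce a prime $\mathfrak q'\subsetneq\mathfrak q$ with $\mathfrak q'\cap A=\mathfrak p$, hence $\mathfrak q'\supseteq\mathfrak p B$, contradicting minimality of $\mathfrak q$.

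The difficulty is that Going Down is not valid for an arbitrary integral extension of domains: it requires, for instance, that $A$ be integrally closed, i.e.\ that $Y$ be normal --- a hypothesis that the Proposition does not assume. Without it, all one can extract is that $\phi^{-1}(W)\to W$ is surjective, so that irreducibility of $W$ forces $W=\overline{\phi(Z)}$ for \emph{at least one} component $Z$, and then $\phi(Z)=W$ with $\dim Z=\dim W$ for that $Z$. The blanket claim about \emph{every} irreducible component of $\phi^{-1}(W)$ is precisely where the argument --- and, as the counterexample below will show, the statement itself --- breaks down, the mechanism being the non-normality of $Y$.
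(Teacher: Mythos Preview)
Your analysis is correct: you prove the true assertions (surjectivity, finite fibers, closedness and dimension-preservation of images) and then correctly identify that the final clause cannot be proved as stated, because the contraction $\mathfrak q\cap A=\mathfrak p$ for \emph{every} minimal prime $\mathfrak q$ over $\mathfrak pB$ requires Going Down, which in turn requires $A$ to be integrally closed.

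The paper takes a different route to the same conclusion. Rather than dissecting where a proof would break, it exhibits an explicit counterexample: a concrete finite dominant morphism $\phi:X\to Y$ of affine surfaces over $\mathbb C$ and an irreducible curve $W\subset Y$ for which $\phi^{-1}(W)$ has an isolated point as an irreducible component. Your theoretical analysis explains \emph{why} the statement fails (the missing normality hypothesis) and, as a bonus, essentially delivers the proof of the corrected version (the paper's Proposition~\ref{p:2}); the paper's counterexample establishes \emph{that} it fails, which is logically stronger --- a gap in one proof strategy does not by itself show the statement is false. The two approaches are complementary, and together they both refute the proposition and motivate its repair.
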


The last part of  this proposition is wrong, and here is a counter example. Assume $k=\mathbb{C}$. Let $X=V(x^2+y^2+(z-\frac{1}{2})^2=\frac{1}{4})\subset \mathbb{A}^3$, and $Y$ the image of the morphism below
$$
\phi:X\longrightarrow \mathbb{A}^3, \quad (x,y,z)\longmapsto ((1-2z)x,(1-3z)y,(1-z)z).
$$
Let $W=\phi(X\cap V(z-\frac{1}{4}))$. Note that $X\cap V(z-\frac{1}{4})$ is an irreducible closed subset of $X$.

\noindent {\bf Claim:} $\phi$ is a finite morphism, and
$$
\phi^{-1}(W)=X\cap V\left(z-\frac{1}{4}\right)\bigcup\left\{\left(\frac{\sqrt{3}}{4},0,\frac{3}{4}\right), \left(-\frac{\sqrt{3}}{4},0,\frac{3}{4}\right)\right\}.%,\quad \textrm{and}\quad \phi(\{(\frac{\sqrt{3}}{4},0,\frac{3}{4})\})\neq W.
$$
In particular, $Z:=\{(-\frac{\sqrt{3}}{4},0,\frac{3}{4})\}$ is an irreducible component of $\phi^{-1}(W)$ such that $\phi(W)\neq Z$.
\begin{proof} Set $A=k[x,y,z]/(x^2+y^2+z^2-z)$. By abuse of notation, we shall use the same symbols to denote the images of $x,y,z\in k[x,y,z]$ in the quotient $A$. We first show that $\phi$ is a finite morphism. Consider the following morphism of $k$-algebras induced by $\phi$
$$
\lambda:k[a,b,c]\rightarrow A, \quad (a,b,c)\mapsto ((1-2z)x,(1-3z)y,(1-z)z),
$$
which makes $A$ an algebra over $k[a,b,c]$. We need show that $A$ is integral over $k[a,b,c]$. It is clear that $z$ is integral over $k[a,b,c]$ since $z^2-z+\lambda(c)=0$ by the definition of $\lambda$. Moreover, as $x^2+y^2+z^2-z=0$ in $A$, to see that $A$ is integral over $k[a,b,c]$, it suffices to show that $y$ is integral over the $k$-subalgebra $k[\lambda(a),\lambda(b),\lambda(c),z]\subseteq A$ of $A$ generated by $\lambda(a),\lambda(b),\lambda(c),z\in A$. We shall do this by finding an integral relation for it. First, by the definition of $\lambda$, we have
$$
zx =\frac{x-\lambda(a)}{2},\quad zy  =\frac{y-\lambda(b)}{3},\quad \textrm{and} \quad
x^2+y^2 =\lambda(c),
$$
giving
\begin{eqnarray*}
\lambda(b)^2 & =& [(1-3z)y]^2=y^2-6zy^2+9z^2y^2\\
&= & y^2-6zy^2+9(z^2\lambda(c)-z^2x^2)\\
& = & y^2-6y\frac{y-\lambda(b)}{3}-9(\frac{x-\lambda(a)}{2})^2+9z^2\lambda(c)\\
& = & -y^2+2\lambda(b)y-\frac{9}{4}x^2+\frac{9}{2}\lambda(a)x-\frac{9}{4}\lambda(a)^2+9z^2\lambda(c)\\
&=& -y^2-\frac{9}{4}(\lambda(c)-y^2)+2\lambda(b)y+\frac{9}{2}\lambda(a)x-\frac{9}{4}\lambda(a)^2+9z^2\lambda(c)\\
&=& \frac{5}{4}y^2+2\lambda(b)y+\frac{9}{2}\lambda(a)x-\frac{9}{4}\lambda(a)^2+9z^2\lambda(c)-\frac{9}{4}\lambda(c).
\end{eqnarray*}
Let $w=-\frac{9}{4}\lambda(a)^2+9z^2\lambda(c)-\frac{9}{4}\lambda(c)\in k[\lambda(a),\lambda(b),\lambda(c),z]\subseteq A$. Consequently,
\begin{eqnarray*}
(1-2z)\lambda(b)^2 & = & (1-2z)[\frac{5}{4}y^2+2\lambda(b)y+\frac{9}{2}\lambda(a)x+w]\\
& = & \frac{5}{4}y^2(1-2z)+2\lambda(b)(1-2z)y+\frac{9}{2}\lambda(a)^2+(1-2z)w \\ & = & \frac{5}{4}y^2-\frac{5}{2}zy^2+2\lambda(b)(1-2z)y+\frac{9}{2}\lambda(a)^2+(1-2z)w \\
& =& \frac{5}{4}y^2-\frac{5}{2}y\frac{y-\lambda(b)}{3}+2\lambda(b)(1-2z)y+\frac{9}{2}\lambda(a)^2+(1-2z)w \\
& =& \frac{5}{12}y^2+\frac{5}{6}\lambda(b)y+2\lambda(b)(1-2z)y+\frac{9}{2}\lambda(a)^2+(1-2z)w.
\end{eqnarray*}
In particular, we obtain the following equality in $A$:
$$
\frac{5}{12}y^2+\frac{5}{6}\lambda(b)y+2\lambda(b)(1-2z)y+\frac{9}{2}\lambda(a)^2+(1-2z)w-(1-2z)\lambda(b)^2 =0.
$$
As $w\in k[\lambda(a),\lambda(b),\lambda(c),z]\subseteq A$, we deduce that $y$ is integral over $k[\lambda(a),\lambda(b),\lambda(c),z]$, thus also integral over $k[a,b,c]$. Therefore, $\phi$ is finite, as claimed.

We now determine $\phi^{-1}(W)$ by computing the fibers of $\phi$. For $i=1,2$, let $(x_i,y_i,z_i)\in X\subset \mathbb A^3$ such that $\phi(x_1,y_1,z_1)=\phi(x_2,y_2,z_2)$. So
\begin{eqnarray*}
\begin{cases} (1-2z_1)x_1=(1-2z_2)x_2\\ (1-3z_1)y_1=(1-3z_2)y_2\\ (1-z_1)z_1=(1-z_2)z_2=:-a.
\end{cases}
\end{eqnarray*}
In particular, $z_1,z_2$ are roots of $z^2-z-a=0$, and
 \begin{eqnarray*}
\begin{cases} x_1^2+y_1^2+a=0\\ x_2^2+y_2^2+a=0.
\end{cases}
\end{eqnarray*}
We shall distinguish the following four different cases:
\begin{itemize}
\item Case 1: $z_1=z_2=\frac{1}{3}$. We have
\begin{eqnarray*}
\begin{cases} x_1=x_2\\y_1=\pm y_2\\
z_1=z_2=\frac{1}{3}.
\end{cases}
\end{eqnarray*}

\item Case 2: $z_1=z_2=\frac{1}{2}$. We have
\begin{eqnarray*}
\begin{cases} x_1=\pm x_2\\y_1= y_2\\
z_1=z_2=\frac{1}{2}.
\end{cases}
\end{eqnarray*}

\item Case 3: $z_1=z_2\notin \{\frac{1}{2}, \frac{1}{3}\}$. We have
\begin{eqnarray*}
\begin{cases} x_1= x_2\\y_1= y_2\\
z_1=z_2.
\end{cases}
\end{eqnarray*}

\item Case 4: $z_1\neq z_2$. Then $z_1,z_2$ are the two roots of $z^2-z-a=0$. Consequently,
\begin{eqnarray*}
\begin{cases} z_1+z_2=1\\z_1z_2=-a,
\end{cases}
\end{eqnarray*}
and
\begin{eqnarray*}
\begin{cases} (1-2z_1)x_1=[1-2(1-z_1)]x_2=(2z_1-1)x_2,\\ (1-3z_1)y_1=[1-3(1-z_1)]y_2=(3z_1-2)y_2\\ (1-z_1)z_1=(1-z_2)z_2
\end{cases}\\
\Rightarrow \begin{cases} x_1=-x_2{\quad\quad\quad\quad\quad\quad}\\ y_1^2=y_2^2{\quad\quad\quad\quad\quad\quad}\\
(1-3z_1)y_1=(3z_1-2)y_2{\quad\quad\quad\quad\quad\quad}\\
(1-z_1)z_1=(1-z_2)z_2=-a.{\quad\quad\quad\quad\quad\quad}
\end{cases}
\end{eqnarray*}
In particular, $y_1=\pm y_2$. If $y_1=y_2\neq 0$, we have $1-3z_1=3z_1-2,$ thus $z_1=z_2=\frac{1}{2}$, which is impossible. If $y_1=-y_2\neq 0,$ we have $1-3z_1=2-3z_1$, giving also a
contradiction. So, we must have $y_1=y_2=0$ in this case. Hence
\begin{eqnarray*}
\begin{cases} x_1=-x_2\\y_1=y_2=0\\z_1+z_2=1, \text{~and~} z_1\neq z_2.
\end{cases}
\end{eqnarray*}
\end{itemize}

Based on the above discussion, we deduce
$$\phi^{-1}(W)=X\cap V(z-\frac{1}{4})\bigcup \left\{\left(\frac{\sqrt{3}}{4},0,\frac{3}{4}\right), \left(-\frac{\sqrt{3}}{4},0,\frac{3}{4}\right)\right\},
$$
and the last statement then follows easily. This completes the proof of our claim.
\end{proof}

We now give a corrected form of the last part of Proposition \ref{p:1}.

\begin{proposition}\label{p:2} Let $\phi:X\rightarrow Y$ be a finite dominant morphism of affine varieties. Assume $Y$ is \emph{normal}. Let $W$ be an irreducible closed subvariety of $Y$, and $Z$ an irreducible component of $\phi^{-1}(W)$. Then $\phi(Z)=W$, and $\dim (Z)=\dim (W)$.
\end{proposition}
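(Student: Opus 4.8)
The plan is to pass to the level of coordinate rings and exploit the going-up and going-down theorems, the latter being available precisely because $Y$ is normal (i.e. $B := k[Y]$ is integrally closed in its fraction field) and $\phi$ corresponds to an injective finite ring map $\phi^\ast : B \hookrightarrow A := k[X]$ with $A$ a domain. Let $\fp \subset B$ be the prime ideal with $V(\fp) = W$, so $\dim W = \dim B/\fp$. An irreducible component $Z$ of $\phi^{-1}(W) = V(\fp A)$ corresponds to a prime $\fq \subset A$ that is minimal over $\fp A$; then $\phi(Z) = V(\fp') = V(\fq \cap B)$, so the assertion $\phi(Z) = W$ is exactly the statement that $\fq \cap B = \fp$.

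First I would show $\fq \cap B = \fp$. The inclusion $\fp \subseteq \fq \cap B$ is clear since $\fp A \subseteq \fq$. For the reverse inclusion, set $\fp'' := \fq \cap B$, a prime of $B$ containing $\fp$; by going-down applied to the finite extension $B \hookrightarrow A$ (valid since $B$ is normal and $A$ is a domain), there is a prime $\fq' \subseteq \fq$ of $A$ with $\fq' \cap B = \fp$. Then $\fp A \subseteq \fq'$ as well, and since $\fq$ is \emph{minimal} over $\fp A$ we must have $\fq' = \fq$, whence $\fp'' = \fq \cap B = \fq' \cap B = \fp$. This gives $\phi(Z) = W$; note surjectivity of $\phi$ onto $W$ on points (not just density of the image) can also be read off from the first part of Proposition~\ref{p:1}, which already tells us $\phi(Z)$ is closed and equals $V(\fq\cap B)$.

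It remains to compare dimensions. Since $\fq \cap B = \fp$, the finite injection $B \hookrightarrow A$ induces a finite injection $B/\fp \hookrightarrow A/\fq$, and a finite (hence integral) extension of domains preserves Krull dimension; therefore $\dim(A/\fq) = \dim(B/\fp)$, i.e. $\dim Z = \dim W$. Here one uses the standard dimension theory of finitely generated algebras over a field, so that Krull dimension agrees with the geometric dimension used in Proposition~\ref{p:1}.

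The main obstacle — and the whole point of the added normality hypothesis — is the use of the going-down theorem: without normality of $Y$ one cannot lift the chain and a minimal prime $\fq$ over $\fp A$ can contract to a prime strictly larger than $\fp$, which is exactly what happens in the counterexample of the previous section (there $Y$ is the non-normal image of the sphere). Everything else is the routine dictionary between closed subvarieties and primes plus the integrality of $B/\fp \hookrightarrow A/\fq$; I would just need to be careful to state which normality/finiteness facts I invoke (Atiyah–Macdonald's going-up/going-down, and $\dim$ being preserved under integral extensions) rather than reprove them.
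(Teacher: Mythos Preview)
Your proof is correct and follows essentially the same route as the paper: translate to coordinate rings, use going-down (available because $B$ is normal and $A$ is a domain) together with minimality of $\fq$ over $\fp A$ to force $\fq\cap B=\fp$, and then read off the dimension equality. The only cosmetic differences are that the paper phrases the going-down step as a proof by contradiction and, for the dimension conclusion, appeals back to the second part of Proposition~\ref{p:1} rather than stating directly that the finite injection $B/\fp\hookrightarrow A/\fq$ preserves Krull dimension.
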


\begin{proof} Let $A=k[X]$ and $B=k[Y]$. As $X$ and $Y$ are affine varieties, one can identify $B$ as a $k$-subalgebra of $A$ using the dominant morphism $\phi$. Write $ W=V(\mathfrak q)$, with $\mathfrak q\subset B$ a prime ideal. So $\phi^{-1}(W)=V(\mathfrak qA)$. Let $\widetilde{\mathfrak q}_1,\widetilde{\mathfrak q}_2,\cdots,\widetilde{\mathfrak q}_r$ be the minimal prime ideals of $V(\mathfrak qA)$. Then $V(\widetilde{\mathfrak q}_1),V(\widetilde{\mathfrak q}_2),\cdot\cdot\cdot,V(\widetilde{\mathfrak q}_r)$ are the irreducible components of $\phi^{-1}(W)$. Since $A$ is integral over $B$,  thanks to \cite[Section 1, Proposition 4.3]{B}, we have $\mathfrak qA\cap B=\mathfrak q$. We now claim that $\mathfrak q=\widetilde{\mathfrak q}_i\cap B$ for all $i$. Clearly $\widetilde{\mathfrak q}_i\cap B\supseteq \mathfrak qA\cap B= \mathfrak q$. Suppose there exists some $i$ such that $\widetilde{\mathfrak q}_i\cap B\supsetneq \mathfrak q$. Because $Y$ is normal, by going-down theorem, there exists a prime ideal ${\widetilde{\mathfrak q}'_i}$ of $A$ such that $\widetilde{\mathfrak q}'_i\subsetneqq \widetilde{\mathfrak q}_i, $ and ${\widetilde{\mathfrak q}'_i}\cap B=\mathfrak q$. In particular, $\widetilde{q}_i\supsetneq \widetilde{q}'_i\supseteq qA$. But this contradicts to the fact that $\widetilde{\mathfrak q}_i\in V(\mathfrak qA)$ is a minimal ideal, proving our claim. Consequently, $\phi$ maps the generic point of $Z$ to that of $W$. So, by the second part of Proposition \ref{p:1}, we find $\phi(Z)=W$ and $\dim(Z)=\dim(W)$.
%because $Y$ is normal, as follows shown in figure 2.

%\begin{picture}(50,50)
%\thicklines
%\put(40,0){q}
%\put(80,0){$\widetilde{q}_i\cap$ B}
%\put(40,40){$\widetilde{q}'_i$}
%\put(85,40){$\widetilde{q}_i$}
%\put(128,40){A}
%\put(128,0){B}
%\put(45,10){\line(0,1){27}}
%\put(88,10){\line(0,1){27}}
%\put(50,3){\line(1,0){27}}
%\put(50,43){\line(1,0){27}}
%\put(96,43){\line(1,0){27}}
%\put(108,3){\line(1,0){20}}
%\put(130,10){\line(0,1){27}}
%\end{picture}

%$\qquad\qquad\qquad\quad\text{Figure 2}$

%we note that ${\widetilde{q}'_i}\cap B=q$ implies ${\widetilde{q}'_i}$ is also a prime ideal containing $\beta.$  By the construction of  $\widetilde{q}_i$,
 %this is a contradiction.

%Since $q=\widetilde{q}_i\cap B$ for all $i,$ we have $Z=V(\widetilde{q}_i)$ for some $i,$  and also have $\phi(Z)=W.$

\end{proof}

\section{The proof}
%\centerline{\bf 3. A new proof}
We now in the position to complete the proof of Theorem 3 at Section 6 of Chapter 1 of Mumford's red book. First of all, we need a lemma, which is a special case of a more general well-known statement. %, and give a new proof. First of all, we need a lemma come from ~\cite{S}.

%\begin{lemma}\label{l:1}~\cite {S} Let $R\rightarrow S$ be a ring map. Let $M$ be an $S-$module. Assume

%1)$R$ is Noethian,

%2)$R$ is a domain,

%3)$R\rightarrow S$ is of finite type, and

%4)$M$ is finite type $S-$module.

%Then there exists a nonzero $f\in R$ such that $M_f$ is a free $R_f-$module.

%\end{lemma}

\begin{lemma}\label{L:1}  Let $A$ be a $k$-algebra of finite type. Assume $A$ is a domain. Then, there exists some $f\in A\setminus \{0\}$, such that the localisation $A_f$ is normal.
\end{lemma}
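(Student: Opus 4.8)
The plan is to reduce the statement to the classical finiteness theorem for integral closures, followed by an elementary localization argument. Write $K=\operatorname{Frac}(A)$ and let $\widetilde A$ denote the integral closure of $A$ in $K$; thus $A\subseteq\widetilde A\subseteq K$, and $\widetilde A$ is by construction integrally closed in $K$. The key input I would invoke is that, since $A$ is a domain which is a $k$-algebra of finite type, $\widetilde A$ is a finitely generated $A$-module. This is precisely the ``special case of a more general well-known statement'' referred to: one proves it by choosing, via Noether normalization, a polynomial subring $R=k[t_1,\dots,t_d]\subseteq A$ over which $A$ (hence also $\widetilde A$, which is the integral closure of $R$ in $K$) is finite, and then applying the finiteness of the integral closure of $R$ in the finite field extension $K/\operatorname{Frac}(R)$ — the separable case coming from nondegeneracy of the trace form, and the general case being a standard refinement. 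In the write-up I would simply quote this classical result.

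Granting the finiteness of $\widetilde A$ over $A$, choose generators $b_1/s_1,\dots,b_n/s_n$ of $\widetilde A$ as an $A$-module, with $b_i\in A$ and $s_i\in A\setminus\{0\}$, and set $f:=s_1s_2\cdots s_n$, which is nonzero since $A$ is a domain. Each generator then lies in $A_f$, so inside $K$ we have the chain of inclusions $A\subseteq\widetilde A\subseteq A_f$. Localizing this chain at the multiplicative set $\{1,f,f^2,\dots\}$ and using $(A_f)_f=A_f$, we get $A_f\subseteq\widetilde A_f\subseteq A_f$, whence $A_f=\widetilde A_f$.

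Finally I would observe that $\widetilde A_f$ is normal. Its fraction field is still $K$, and a localization of an integrally closed domain is again integrally closed in its fraction field: any $\xi\in K$ integral over $\widetilde A_f$ satisfies a monic relation whose coefficients, after multiplying through by a suitable power of $f$, can be replaced to exhibit $f^m\xi$ (for some $m$) as integral over $\widetilde A$, hence $f^m\xi\in\widetilde A$ and $\xi\in\widetilde A_f$. Therefore $A_f=\widetilde A_f$ is normal, as required.

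The only substantive point is the finiteness of $\widetilde A$ as an $A$-module; I expect this to be the sole real obstacle, and everything after it — the choice of $f$ clearing the finitely many denominators, and the stability of normality under localization — is entirely formal.
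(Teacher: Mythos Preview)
Your proof is correct and follows essentially the same route as the paper: invoke finiteness of the integral closure $\widetilde A$ over $A$ (the paper cites Matsumura rather than sketching Noether normalization), clear the finitely many denominators of a generating set to obtain $f$ with $\widetilde A\subseteq A_f$, and conclude $A_f=\widetilde A_f$ is normal as a localization of a normal domain. The only cosmetic difference is that the paper picks generators $x_i\in K$ and a single $f$ with $fx_i\in A$, whereas you take $f$ as the product of the individual denominators; the arguments are otherwise identical.
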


\begin{proof} Let $K$ denote the fraction field of $A$, and $A'$ the integral closure of $A$ in $K$. Since $A$ is a finitely generated over a field, $A'$ is finite as an $A$-module by \cite[Chapter 12 Theorem 72]{Mat}. In particular, there exist $x_1,\cdots,x_n\in A'$ with $A'=\sum_{i=1}^n A\cdot x_i\subseteq K$.  As $K$ is the fraction field of $A$, one can find $f\in A\setminus \{0\}$ such that $fx_i\in A$ for all $i$. Therefore, $A[1/f]\subset A'[1/f]=\sum_{i=1}^{r}A[1/f]x_i\subset A[1/f]\subseteq K$. Thus, $A_f=A[1/f]=A'[1/f]=A'_f$ is the localisation of the normal ring $A'$, hence is normal as well.
\end{proof}

\begin{theorem}[\cite{M} Chapter I \S~6 Theorem 3]\label{t:1} Let $f:X\rightarrow Y$ be a dominating morphism of varieties and let $r=\dim X-\dim Y$. Then there exists a nonempty open set $U\subset Y$ such that:
\begin{enumerate}
\item $U\subset f(X)$, and
\item for all irreducible closed subsets $W\subset Y$ such that $W\cap U\neq\emptyset,$ and for all irreducible components $Z$ of $f^{-1}(W)$ such that $Z\cap f^{-1}(U)\neq\emptyset$,
$$\dim Z=\dim W+r$$
or
$$\mathrm{codim}(Z, X)=\mathrm{codim}(W,Y).$$
\end{enumerate}
\end{theorem}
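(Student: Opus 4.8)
The plan is to reduce $f$ to a \emph{finite} dominant morphism with normal target and then invoke Proposition \ref{p:2}; the whole point of Lemma \ref{L:1} is to supply that normality, and this is precisely the ingredient missing from the argument in \cite{M}. Note first that the two alternatives in (2) are literally equivalent: $\mathrm{codim}(Z,X)=\mathrm{codim}(W,Y)$ reads $\dim X-\dim Z=\dim Y-\dim W$, i.e.\ $\dim Z=\dim W+r$. So it suffices to produce a nonempty open $U\subseteq Y$ satisfying (1) and with $\dim Z=\dim W+r$ in (2). Throughout, the invariant $r=\dim X-\dim Y=\mathrm{tr.deg}_{k(Y)}k(X)$ is unchanged, since every shrinking below is to a dense open.

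I will use two elementary reduction principles. \emph{(a)} If $Y_1\subseteq Y$ is a nonempty open and the theorem holds for the restriction $f^{-1}(Y_1)\to Y_1$ with open set $U$, then it holds for $f$ with the same $U$: for irreducible closed $W\subseteq Y$ meeting $U$, the set $W\cap Y_1$ is irreducible, closed in $Y_1$, dense in $W$ and of the same dimension; any component $Z$ of $f^{-1}(W)$ meeting $f^{-1}(U)$ also meets $f^{-1}(Y_1)$, so $Z\cap f^{-1}(Y_1)$ is dense in $Z$, of the same dimension, and is a component of $f^{-1}(W)\cap f^{-1}(Y_1)=f^{-1}(W\cap Y_1)$. \emph{(b)} If $X=\bigcup_{i=1}^{n}V_i$ is a finite affine open cover and the theorem holds for each $f|_{V_i}\colon V_i\to Y$ with open set $U_i$, then it holds for $f$ with $U:=\bigcap_i U_i$, which is nonempty as a finite intersection of dense opens of the irreducible $Y$: a component $Z$ of $f^{-1}(W)$ meeting $f^{-1}(U)$ meets some $V_i$, and $Z\cap V_i$ is then dense in $Z$, of the same dimension, and a component of $f^{-1}(W)\cap V_i$. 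Using \emph{(a)} I replace $Y$ by an affine open and then, via Lemma \ref{L:1} applied to $k[Y]$, by a distinguished affine open on which $k[Y]$ is normal; using \emph{(b)} I may further assume $X$ affine. So I am reduced to the case $X=\mathrm{Spec}\,A$, $Y=\mathrm{Spec}\,B$ with $A,B$ finitely generated $k$-algebras that are domains, $B$ normal, and $B\hookrightarrow A$ with $\mathrm{tr.deg}_{\mathrm{Frac}\,B}\mathrm{Frac}\,A=r$.

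Now I make $f$ finite. Since $A\otimes_B \mathrm{Frac}\,B=(B\setminus\{0\})^{-1}A$ is a localisation of the domain $A$, it is a domain of dimension $r$; applying Noether normalization over the field $\mathrm{Frac}\,B$ and clearing denominators yields $0\neq s\in B$ and $t_1,\dots,t_r\in A$, algebraically independent over $\mathrm{Frac}\,B$, such that $A[1/s]$ is a \emph{finite} module over $B[1/s][t_1,\dots,t_r]\cong B[1/s][T_1,\dots,T_r]$. Shrinking $Y$ to $Y_s$ (harmless by \emph{(a)}, and $B[1/s]$ is still normal), I may assume $A$ is a finite $B':=B[t_1,\dots,t_r]$-module. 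Since $B$ is normal so is $B'\cong B[T_1,\dots,T_r]$, and the inclusions $B\subseteq B'\subseteq A$ give a factorization $X\xrightarrow{\,g\,}Y\times\mathbb A^r\xrightarrow{\,\pi\,}Y$, with $\pi$ the projection, $g$ finite and dominant, and $Y\times\mathbb A^r$ normal. Take $U:=Y$. By Proposition \ref{p:1} the finite dominant morphism $g$ is surjective, hence so is $f=\pi\circ g$, which is (1). For (2), let $W\subseteq Y$ be an irreducible closed subvariety; then $W\times\mathbb A^r$ is an irreducible closed subvariety of the normal variety $Y\times\mathbb A^r$ of dimension $\dim W+r$, and $f^{-1}(W)=g^{-1}(W\times\mathbb A^r)$. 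By Proposition \ref{p:2} applied to $g$, every irreducible component $Z$ of $g^{-1}(W\times\mathbb A^r)=f^{-1}(W)$ has $\dim Z=\dim(W\times\mathbb A^r)=\dim W+r$. Unwinding the reductions furnishes the required open set for the original $f$.

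The real content sits in two places. First, the reduction to a \emph{genuinely} finite $g$: choosing a bare transcendence basis only makes $g$ finite over a distinguished open $\{h\neq 0\}\subseteq Y\times\mathbb A^r$ with $h$ truly involving the $\mathbb A^r$-coordinates, and no shrinking of $Y$ can repair this; it is the Noether-normalization change of coordinates that lets the denominator be chosen in $B$ so that only $Y$ is shrunk, and this is what allows Proposition \ref{p:2} to be applied over \emph{all} of $Y\times\mathbb A^r$ (otherwise one is left with the delicate problem of the "exceptional" components of $f^{-1}(W)$ lying over $\{h=0\}$, illustrated by the counterexample given above). Second --- and this is exactly the gap in \cite{M} --- the last assertion of Proposition \ref{p:1} is false without normality, so one must interpose Lemma \ref{L:1} to make $Y$, and therefore $Y\times\mathbb A^r$, normal before quoting the corrected Proposition \ref{p:2}. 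The reduction bookkeeping in \emph{(a)} and \emph{(b)} (the affine cover in \emph{(b)} is what keeps components of $f^{-1}(W)$ from being silently lost on passing to an open piece) is routine but should be written out.
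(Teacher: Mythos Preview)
Your proof is correct and follows essentially the same strategy as the paper: reduce to an affine finite dominant map $g\colon X\to U\times\mathbb A^r$ with $U$ (hence $U\times\mathbb A^r$) normal via Lemma~\ref{L:1}, then apply Proposition~\ref{p:2}. The only cosmetic differences are that you invoke Lemma~\ref{L:1} \emph{before} the Noether-normalization step (and then note that the further localization at $s$ preserves normality), whereas the paper first performs Mumford's reduction to the factorized form and only then shrinks $U$ to be normal; and you spell out the reduction principles (a) and (b) explicitly, which the paper absorbs into the phrase ``as in the original proof of Mumford''.
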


\begin{proof} As in the original proof of Mumford, we reduce to the following case: $X,Y$ are affine, and there exists some non-empty open subset $U\subset Y$, such that the induced map $f^{-1}(U)\ra U$ is decomposed as
\[
f^{-1}(U)\stackrel{\pi}{\longrightarrow}U\times \mathbb A^r \longrightarrow U,
\]
where the first map $\pi$ is finite and dominant, while the second is the natural projection. So $\pi$ is surjective by the first part of Proposition \ref{p:1}, and $U\subset f(X)$. Shrinking $U$ if necessary, we further assume $U$ normal according to Lemma \ref{L:1} above. In particular, the affine variety $U\times \mathbb A^r$ is also normal. To finish the proof, let $W\subset Y$ be an irreducible closed subset that meets $U$, and let $Z\subset X$ be an irreducible component of $f^{-1}(W)$ such that $Z\cap f^{-1}(U)\neq \emptyset$. Let $W_0=W\cap U$, and $Z_0=Z\cap f^{-1}(U)$. Then $\dim(W)=\dim(W_0)$ and $\dim(Z)=\dim(Z_0)$. Since $W_0$ is an irreducible closed subset of $U$, one checks that $W_0\times \mathbb A^r$ is an irreducible closed subset of $U\times \mathbb A^r$. Moreover, $Z_0$ is an irreducible component of $\pi^{-1}(W_0\times \mathbb A^r)=f^{-1}(W_0)$. As $W_0\times \mathbb A^r$ is normal, by Proposition \ref{p:2}, $\pi(Z_0)=W_0\times \mathbb A^r$ and $\dim(Z_0)=\dim(W_0\times \mathbb A^r)=\dim(W_0)+r$. Consequently, $\dim(Z)=\dim(W)+r$, as claimed by (2).

\end{proof}

\bigskip

\end{document}